\documentclass{amsart}
\usepackage{amstext,amssymb,amsthm,amsopn,newlfont,graphpap,graphics,graphicx,mathrsfs,enumitem}
\allowdisplaybreaks
\usepackage[parfill]{parskip}
\usepackage[noadjust]{cite}
\usepackage{epigraph}
\usepackage[colorlinks=true,
            linkcolor=red,
            urlcolor=blue,
            citecolor=magenta]{hyperref}
\usepackage{color}
\usepackage{mathrsfs}
\allowdisplaybreaks
\theoremstyle{plain}
\newtheorem{thm}{Theorem}[section]
\newtheorem*{thm*}{Theorem}
\newtheorem{prop}{Proposition}[section]
\newtheorem*{prop*}{Proposition}

\newtheorem*{cor*}{Corollary}
\newtheorem{lem}{Lemma}[section]
\newtheorem*{lem*}{Lemma}
\theoremstyle{definition}
\newtheorem{defn}{Definition}[section]
\newtheorem*{defn*}{Definition}

\newtheorem*{exmp*}{Example}

\newtheorem*{exmps*}{Examples}

\newtheorem*{rem*}{Remark}
\newtheorem{rems}{Remarks}[section]
\newtheorem*{rems*}{Remarks}

\newtheorem*{note*}{Note}
\newcommand{\N}{{\mathbb N}}
\newcommand{\Z}{{\mathbb Z}}
\newcommand{\C}{{\mathbb C}}

\DeclareMathOperator{\Rep}{Re\,}

\DeclareMathOperator{\dist}{dist}
\DeclareMathOperator{\spa}{span}
\begin{document}
\title[On the Gevrey ultradifferentiability of weak solutions]
{On the Gevrey ultradifferentiability\\ 
of weak solutions\\
of an abstract evolution equation\\
with a scalar type spectral operator\\
of orders less than one}
\author[Marat V. Markin]{Marat V. Markin}
\address{
Department of Mathematics\newline
California State University, Fresno\newline
5245 N. Backer Avenue, M/S PB 108\newline
Fresno, CA 93740-8001, USA
}
\email{mmarkin@csufresno.edu}
\keywords{Weak solution, scalar type spectral operator, Gevrey classes}
\subjclass[2010]{Primary 34G10, 47B40, 30D15; Secondary 47B15, 47D06, 47D60, 30D60}
\begin{abstract}
It is shown that, if all \textit{weak solutions} of the evolution equation
\begin{equation*}
y'(t)=Ay(t),\ t\ge 0,
\end{equation*}
with a scalar type spectral operator $A$ in a complex Banach space are Gevrey ultradifferentiable of orders less than one, then the operator $A$ is necessarily bounded.
\end{abstract}

\maketitle
\epigraph{\textit{Too much of a good thing can be wonderful.}}{Mae West}
\section[Introduction]{Introduction}

In \cite{Markin2011,Markin2018(3),Markin2018(4)}, found are characterizations of the strong differentiability and Gevrey ultradifferentiability of order $\beta\ge 1$, in particular \textit{analyticity} and \textit{entireness}, on $[0,\infty)$ and $(0,\infty)$ of all \textit{weak solutions} of the evolution equation
\begin{equation}\label{1}
y'(t)=Ay(t),\ t\ge 0, 
\end{equation}
with a scalar type spectral operator $A$ in a complex Banach space.

As is shown by {\cite[Theorem $4.1$]{Markin2018(3)}} (see also {\cite[Corollary $4.1$]{Markin2018(3)}}), all weak solutions of equation \eqref{1} can be \textit{entire} vector functions, i.e., belong to the
\textit{first-order} Beurling type Gevrey class 
${\mathscr E}^{(1)}([0,\infty),X)$ (see Preliminaries), while the operator $A$ is \textit{unbounded}, e.g., when $A$ is a semibounded below \textit{self-adjoint} operator in a complex Hilbert space (see {\cite[Corollary $4.1$]{Markin2001(1)}}). This remarkable fact contrasts the situation when, in \eqref{1}, a closed densely defined linear operator $A$ generates a $C_0$-semigroup, in which case the strong differentiability of all weak solutions of \eqref{1} at $0$ alone immediately implies \textit{boundedness} for $A$ (cf. \cite{Engel-Nagel}, see also \cite{Markin2002(2)}).

It remains to examine whether all weak solutions of equation \eqref{1} with a scalar type spectral operator $A$ in a complex Banach space can belong to the Gevrey classes of orders \textit{less than one} (not necessarily to the same one) with $A$ remaining \textit{unbounded}.

In this paper, developing the results of
\cite{Markin2011,Markin2018(3),Markin2018(4)}, we show that an unbounded scalar type spectral operator $A$ in a complex Banach space cannot sustain the strong Gevrey ultradifferentiability of all weak solutions of equation \eqref{1} for orders less than one, i.e., that
imposing on all the weak solutions along with the entireness requirement, certain growth at infinity conditions (see Preliminaries) necessarily makes the operator $A$ \textit{bounded}. Thus, we generalize the corresponding results for equation \eqref{1} with a \textit{normal operator} $A$ in a complex Hilbert space found in \cite{Markin2001(2)}.

\begin{defn}[Weak Solution]\label{ws}\ \\
Let $A$ be a densely defined closed linear operator in a Banach space $(X,\|\cdot\|)$. A strongly continuous vector function $y:[0,\infty)\rightarrow X$ is called a {\it weak solution} of equation \eqref{1} if, for any $g^* \in D(A^*)$,
\begin{equation*}
\dfrac{d}{dt}\langle y(t),g^*\rangle = \langle y(t),A^*g^* \rangle,\ t\ge 0,
\end{equation*}
where $D(\cdot)$ is the \textit{domain} of an operator, $A^*$ is the operator {\it adjoint} to $A$, and $\langle\cdot,\cdot\rangle$ is the {\it pairing} between
the space $X$ and its dual $X^*$ (cf. \cite{Ball}).
\end{defn}

\begin{rems}\label{remsws}\
\begin{itemize}
\item Due to the \textit{closedness} of $A$, the weak solution of \eqref{1} can be equivalently defined to be a strongly continuous vector function $y:[0,\infty)\mapsto X$ such that, for all $t\ge 0$,
\begin{equation*}
\int_0^ty(s)\,ds\in D(A)\ \text{and} \ y(t)=y(0)+A\int_0^ty(s)\,ds
\end{equation*}
and is also called a \textit{mild solution} (cf. {\cite[Ch. II, Definition 6.3]{Engel-Nagel}}, {\cite[Preliminaries]{Markin2018(2)}}).
\item Such a notion of \textit{weak solution}, which need not be differentiable in the strong sense, generalizes that of \textit{classical} one, strongly differentiable on $[0,\infty)$ and satisfying the equation in the traditional plug-in sense, the classical solutions being precisely the weak ones strongly differentiable on $[0,\infty)$.
\item When a closed densely defined linear operator $A$
in a complex Banach space $X$ generates a $C_0$-semigroup $\left\{T(t) \right\}_{t\ge 0}$ of  bounded linear operators (see, e.g., \cite{Hille-Phillips,Engel-Nagel}), i.e., the associated \textit{abstract Cauchy problem} (\textit{ACP})
\begin{equation}\label{ACP}
\begin{cases}
y'(t)=Ay(t),\ t\ge 0,\\
y(0)=f
\end{cases}
\end{equation}
is \textit{well-posed} (cf. {\cite[Ch. II, Definition 6.8]{Engel-Nagel}}), the weak solutions of equation \eqref{1} are the orbits
\begin{equation}\label{semigroup}
y(t)=T(t)f,\ t\ge 0,
\end{equation}
with $f\in X$ {\cite[Ch. II, Proposition 6.4]{Engel-Nagel}} (see also {\cite[Theorem]{Ball}}), whereas the classical ones are those with $f\in D(A)$
(see, e.g., {\cite[Ch. II, Proposition 6.3]{Engel-Nagel}}). 
\item In our discourse, the associated \textit{ACP} need not be \textit{well-posed}, i.e., the scalar type spectral operator $A$ need not generate a $C_0$-semigroup (cf. \cite{Markin2002(2)}).
\end{itemize} 
\end{rems} 

\section[Preliminaries]{Preliminaries}

Here, for the reader's convenience, we outline certain essential preliminaries.

\subsection{Scalar Type Spectral Operators}\ 

Henceforth, unless specified otherwise, $A$ is supposed to be a {\it scalar type spectral operator} in a complex Banach space $(X,\|\cdot\|)$ with strongly $\sigma$-additive \textit{spectral measure} (the \textit{resolution of the identity}) $E_A(\cdot)$ assigning to each Borel set $\delta$ of the complex plane $\C$ a projection operator $E_A(\delta)$ on $X$ and having the operator's \textit{spectrum} $\sigma(A)$ as its {\it support} \cite{Dunford1954,Survey58,Dun-SchIII}.

Observe that, in a complex finite-dimensional space, 
the scalar type spectral operators are all linear operators on the space, for which there is an \textit{eigenbasis} (see, e.g., \cite{Survey58,Dun-SchIII}) and, in a complex Hilbert space, the scalar type spectral operators are precisely all those that are similar to the {\it normal} ones \cite{Wermer}.

Associated with a scalar type spectral operator in a complex Banach space is the {\it Borel operational calculus} analogous to that for a \textit{normal operator} in a complex Hilbert space \cite{Survey58,Dun-SchII,Dun-SchIII,Plesner}, which assigns to any Borel measurable function $F:\sigma(A)\to \C$ a scalar type spectral operator
\begin{equation*}
F(A):=\int\limits_{\sigma(A)} F(\lambda)\,dE_A(\lambda)
\end{equation*}
defined as follows:
\[
F(A)f:=\lim_{n\to\infty}F_n(A)f,\ f\in D(F(A)),\
D(F(A)):=\left\{f\in X\middle| \lim_{n\to\infty}F_n(A)f\ \text{exists}\right\},
\]
where
\begin{equation*}
F_n(\cdot):=F(\cdot)\chi_{\{\lambda\in\sigma(A)\,|\,|F(\lambda)|\le n\}}(\cdot),
\ n\in\N,
\end{equation*}
($\chi_\delta(\cdot)$ is the {\it characteristic function} of a set $\delta\subseteq \C$, $\N:=\left\{1,2,3,\dots\right\}$ is the set of \textit{natural numbers}) and
\begin{equation*}
F_n(A):=\int\limits_{\sigma(A)} F_n(\lambda)\,dE_A(\lambda),\ n\in\N,
\end{equation*}
are {\it bounded} scalar type spectral operators on $X$ defined in the same manner as for a {\it normal operator} (see, e.g., \cite{Dun-SchII,Plesner}).

In particular,
\begin{equation*}
A^n=\int\limits_{\sigma(A)} \lambda^n\,dE_A(\lambda),\ n\in\Z_+,
\end{equation*}
($\Z_+:=\left\{0,1,2,\dots\right\}$ is the set of \textit{nonnegative integers}, $A^0:=I$, $I$ is the \textit{identity operator} on $X$) and
\begin{equation}\label{exp}
e^{zA}:=\int\limits_{\sigma(A)} e^{z\lambda}\,dE_A(\lambda),\ z\in\C.
\end{equation}

The properties of the {\it spectral measure} and {\it operational calculus}, exhaustively delineated in \cite{Survey58,Dun-SchIII}, underlie the entire subsequent discourse. Here, we underline a few facts of particular importance.

Due to its {\it strong countable additivity}, the spectral measure $E_A(\cdot)$ is {\it bounded} \cite{Dun-SchI,Dun-SchIII}, i.e., there is such an $M\ge 1$ that, for any Borel set $\delta\subseteq \C$,
\begin{equation}\label{bounded}
\|E_A(\delta)\|\le M.
\end{equation}

Observe that the notation $\|\cdot\|$ is used here to designate the norm in the space $L(X)$ of all bounded linear operators on $X$. We adhere to this rather conventional economy of symbols in what follows also adopting the same notation for the norm in the dual space $X^*$.

For any $f\in X$ and $g^*\in X^*$, the \textit{total variation measure} $v(f,g^*,\cdot)$ of the complex-valued Borel measure $\langle E_A(\cdot)f,g^* \rangle$ is a {\it finite} positive Borel measure with
\begin{equation}\label{tv}
v(f,g^*,\C)=v(f,g^*,\sigma(A))\le 4M\|f\|\|g^*\|
\end{equation}
(see, e.g., \cite{Markin2004(1),Markin2004(2)}).

Also (Ibid.), for a Borel measurable function $F:\C\to \C$, $f\in D(F(A))$, $g^*\in X^*$, and a Borel set $\delta\subseteq \C$,
\begin{equation}\label{cond(ii)}
\int\limits_\delta|F(\lambda)|\,dv(f,g^*,\lambda)
\le 4M\|E_A(\delta)F(A)f\|\|g^*\|.
\end{equation}
In particular, for $\delta=\sigma(A)$,
\begin{equation}\label{cond(i)}
\int\limits_{\sigma(A)}|F(\lambda)|\,d v(f,g^*,\lambda)\le 4M\|F(A)f\|\|g^*\|.
\end{equation}

Observe that the constant $M\ge 1$ in \eqref{tv}--\eqref{cond(i)} is from 
\eqref{bounded}.

Further, for a Borel measurable function $F:\C\to [0,\infty)$, a Borel set $\delta\subseteq \C$, a sequence $\left\{\Delta_n\right\}_{n=1}^\infty$ 
of pairwise disjoint Borel sets in $\C$, and 
$f\in X$, $g^*\in X^*$,
\begin{equation}\label{decompose}
\int\limits_{\delta}F(\lambda)\,dv(E_A(\cup_{n=1}^\infty \Delta_n)f,g^*,\lambda)
=\sum_{n=1}^\infty \int\limits_{\delta\cap\Delta_n}F(\lambda)\,dv(E_A(\Delta_n)f,g^*,\lambda).
\end{equation}

Indeed, since, for any Borel sets $\delta,\sigma\subseteq \C$,
\begin{equation*}
E_A(\delta)E_A(\sigma)=E_A(\delta\cap\sigma)
\end{equation*}
\cite{Survey58,Dun-SchIII}, 
for the total variation,
\begin{equation*}
v(E_A(\delta)f,g^*,\sigma)=v(f,g^*,\delta\cap\sigma).
\end{equation*}

Whence, due to the {\it nonnegativity} of $F(\cdot)$ (see, e.g., \cite{Halmos}),
\begin{multline*}
\int\limits_\delta F(\lambda)\,dv(E_A(\cup_{n=1}^\infty \Delta_n)f,g^*,\lambda)
=\int\limits_{\delta\cap\cup_{n=1}^\infty \Delta_n}F(\lambda)\,dv(f,g^*,\lambda)
\\
\ \
=\sum_{n=1}^\infty \int\limits_{\delta\cap\Delta_n}F(\lambda)\,dv(f,g^*,\lambda)
=\sum_{n=1}^\infty \int\limits_{\delta\cap\Delta_n}F(\lambda)\,dv(E_A(\Delta_n)f,g^*,\lambda).
\hfill
\end{multline*}

The following statement, allowing to characterize the domains of Borel measurable functions of a scalar type spectral operator in terms of positive Borel measures, is fundamental for our discourse.

\begin{prop}[{\cite[Proposition $3.1$]{Markin2002(1)}}]\label{prop}\ \\
Let $A$ be a scalar type spectral operator in a complex Banach space $(X,\|\cdot\|)$ with spectral measure $E_A(\cdot)$ and $F:\sigma(A)\to \C$ be a Borel measurable function. Then $f\in D(F(A))$ iff
\begin{enumerate}[label={(\roman*)}]
\item for each $g^*\in X^*$, 
$\displaystyle \int\limits_{\sigma(A)} |F(\lambda)|\,d v(f,g^*,\lambda)<\infty$ and
\item $\displaystyle \sup_{\{g^*\in X^*\,|\,\|g^*\|=1\}}
\int\limits_{\{\lambda\in\sigma(A)\,|\,|F(\lambda)|>n\}}
|F(\lambda)|\,dv(f,g^*,\lambda)\to 0,\ n\to\infty$,
\end{enumerate}
where $v(f,g^*,\cdot)$ is the total variation measure of $\langle E_A(\cdot)f,g^* \rangle$.
\end{prop}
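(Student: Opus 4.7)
The plan is to prove the equivalence by exploiting the fact that the cut-offs $F_n$ are bounded Borel functions, so $F_n(A)$ is a well-defined bounded scalar-type spectral operator for which the standard duality identity
\[
\langle F_n(A)f,g^*\rangle=\int_{\sigma(A)}F_n(\lambda)\,d\langle E_A(\lambda)f,g^*\rangle
\]
holds, and to which the a priori bounds \eqref{cond(ii)} and \eqref{cond(i)} apply directly. The definition $f\in D(F(A))$ is simply that the sequence $\{F_n(A)f\}$ converges in $X$, so both directions amount to translating this convergence back and forth between the space side and the measure side.

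For the forward direction I would fix $f\in D(F(A))$ and write $C:=\sup_n\|F_n(A)f\|<\infty$. Applying \eqref{cond(i)} to the bounded function $F_n$ gives
\[
\int_{\{|F|\le n\}}|F(\lambda)|\,dv(f,g^*,\lambda)=\int_{\sigma(A)}|F_n(\lambda)|\,dv(f,g^*,\lambda)\le 4MC\|g^*\|,
\]
whence (i) follows by monotone convergence. For (ii) the key observation is that the difference $F_m(A)f-F_n(A)f$ is precisely the value at $f$ of the operator associated with $G_{n,m}:=F\chi_{\{n<|F|\le m\}}$; since $F_m(A)f$ converges, the function $G_n:=F\chi_{\{|F|>n\}}$ satisfies $f\in D(G_n(A))$ with $G_n(A)f=F(A)f-F_n(A)f$. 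Applying \eqref{cond(i)} to $G_n$ then yields
\[
\sup_{\|g^*\|=1}\int_{\{|F|>n\}}|F(\lambda)|\,dv(f,g^*,\lambda)\le 4M\|F(A)f-F_n(A)f\|\to 0.
\]

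For the converse I would assume (i) and (ii) and show that $\{F_n(A)f\}$ is Cauchy. For $m>n$, the duality identity for the bounded function $F_m-F_n=F\chi_{\{n<|F|\le m\}}$ gives
\[
|\langle F_m(A)f-F_n(A)f,g^*\rangle|\le\int_{\{n<|F|\le m\}}|F(\lambda)|\,dv(f,g^*,\lambda)\le\int_{\{|F|>n\}}|F(\lambda)|\,dv(f,g^*,\lambda),
\]
where finiteness of the right-hand side is supplied by (i). Taking the supremum over $\|g^*\|\le 1$ and invoking the Hahn--Banach expression of the norm yields
\[
\|F_m(A)f-F_n(A)f\|\le\sup_{\|g^*\|\le 1}\int_{\{|F|>n\}}|F(\lambda)|\,dv(f,g^*,\lambda),
\]
which tends to $0$ uniformly in $m>n$ by (ii). Completeness of $X$ then gives $f\in D(F(A))$.

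The main conceptual obstacle is the forward direction of (ii): one must recognise that $F(A)f-F_n(A)f$ is itself of the form $G_n(A)f$ for the truncated function $G_n=F\chi_{\{|F|>n\}}$, so that the one-sided bound \eqref{cond(i)} (or \eqref{cond(ii)}) can be pressed into service to pass from operator-norm smallness of the tail to the required uniform integral smallness. Everything else is a standard duality/Hahn--Banach argument combined with monotone convergence.
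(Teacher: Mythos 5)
Your argument is correct, and it is the natural one: the paper itself states this proposition only as a quotation from \cite[Proposition 3.1]{Markin2002(1)} and gives no proof here, so there is nothing internal to compare against, but your proof uses exactly the tools the paper's Preliminaries set up for this purpose (the duality identity for bounded Borel functions, the estimates \eqref{cond(ii)}--\eqref{cond(i)}, monotone convergence, and the Hahn--Banach expression of the norm). The one step worth making fully explicit is your identification $F(A)f-F_n(A)f=G_n(A)f$ with $G_n=F\chi_{\{|F|>n\}}$: it holds because $(G_n)_m=F\chi_{\{n<|F|\le m\}}=F_m-F_n$ for $m\ge n$, so $(G_n)_m(A)f$ converges precisely when $F_m(A)f$ does; alternatively, condition (ii) in the forward direction follows directly from \eqref{cond(ii)} with $\delta=\{\lambda\in\sigma(A)\,|\,|F(\lambda)|>n\}$ together with the strong continuity of the spectral measure, which is the route the surrounding text of the paper itself takes in display \eqref{second1}.
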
 

The succeeding key theorem provides a description of the weak solutions of equation \eqref{1} with a scalar type spectral operator $A$ in a complex Banach space.

\begin{thm}[{\cite[Theorem $4.2$]{Markin2002(1)}}]\label{GWS}\ \\
Let $A$ be a scalar type spectral operator in a complex Banach space $(X,\|\cdot\|)$. A vector function $y:[0,\infty) \to X$ is a weak solution 
of equation \eqref{1} iff there is an $\displaystyle f \in \bigcap_{t\ge 0}D(e^{tA})$ such that
\begin{equation}\label{expf}
y(t)=e^{tA}f,\ t\ge 0,
\end{equation}
the operator exponentials understood in the sense of the Borel operational calculus (see \eqref{exp}).
\end{thm}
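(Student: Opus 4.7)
The plan is to prove both directions via the Borel operational calculus together with Proposition~\ref{prop}. For the \emph{if} direction, let $f\in\bigcap_{t\ge 0}D(e^{tA})$ and set $y(t):=e^{tA}f$. Strong continuity of $y$ on $[0,\infty)$ follows from dominated convergence with respect to the positive Borel measures $v(f,g^*,\cdot)$, using the estimates \eqref{tv}--\eqref{cond(i)} together with the elementary pointwise bound $|e^{t\lambda}-e^{s\lambda}|\le |t-s||\lambda|e^{T|\lambda|}$ on any compact subinterval $[0,T]$. For $g^*\in D(A^*)$, differentiating the scalar representation $\langle y(t),g^*\rangle=\int_{\sigma(A)}e^{t\lambda}\,d\langle E_A(\lambda)f,g^*\rangle$ under the integral sign yields $\frac{d}{dt}\langle y(t),g^*\rangle=\int_{\sigma(A)}\lambda e^{t\lambda}\,d\langle E_A(\lambda)f,g^*\rangle=\langle y(t),A^*g^*\rangle$; the final equality rests on the Borel identity $d\langle E_A(\lambda)f,A^*g^*\rangle=\lambda\,d\langle E_A(\lambda)f,g^*\rangle$, first checked on bounded Borel sets (where $E_A(\delta)f\in D(A)$ because $A$ restricted to $E_A(\delta)X$ is bounded) and then extended by $\sigma$-additivity.

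For the \emph{only if} direction, suppose $y$ is a weak solution and set $f:=y(0)$. Fix the exhaustion $\delta_n:=\sigma(A)\cap\{|\lambda|\le n\}$, $n\in\N$. The crucial reduction is that $y_n(t):=E_A(\delta_n)y(t)$ is a classical solution of the Cauchy problem for the \emph{bounded} scalar type spectral operator $A_n:=\int_{\delta_n}\lambda\,dE_A(\lambda)$ with initial datum $E_A(\delta_n)f$; this follows from the integral characterization of weak solution in Remarks~\ref{remsws} together with the commutation $E_A(\delta_n)A\subseteq AE_A(\delta_n)$. Uniqueness for the bounded generator then gives $y_n(t)=e^{tA_n}E_A(\delta_n)f=\int_{\delta_n}e^{t\lambda}\,dE_A(\lambda)f$; since $E_A(\delta_n)\to I$ strongly by strong $\sigma$-additivity, $y_n(t)\to y(t)$ in $X$. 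With this representation in hand, \eqref{cond(i)} applied to $A_n$ yields $\int_{\delta_n}|e^{t\lambda}|\,dv(f,g^*,\lambda)\le 4M\|E_A(\delta_n)y(t)\|\|g^*\|$, and monotone convergence verifies condition~(i) of Proposition~\ref{prop}; similarly, \eqref{cond(ii)} yields the $g^*$-uniform bound $\int_{\{|e^{t\lambda}|>n\}}|e^{t\lambda}|\,dv(f,g^*,\lambda)\le 4M\|E_A(\{|e^{t\lambda}|>n\})y(t)\|\|g^*\|$, which together with $E_A(\{|e^{t\lambda}|>n\})\to 0$ strongly verifies condition~(ii). Hence $f\in D(e^{tA})$ and $y(t)=e^{tA}f$.

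The main obstacle is the \emph{only if} direction: justifying the reduction to the bounded Cauchy problem requires a careful interchange of $E_A(\delta_n)$ with $A$ and with the integral $\int_0^t y(s)\,ds$, and the uniform-in-$g^*$ verification of condition~(ii) of Proposition~\ref{prop} rests on applying the spectral-measure estimate \eqref{cond(ii)} at the level of the truncations $\delta_n$ and then passing to the limit.
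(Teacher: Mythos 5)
First, note that the paper contains no proof of Theorem~\ref{GWS}: it is imported verbatim from {\cite[Theorem $4.2$]{Markin2002(1)}}, so there is no internal argument to compare yours against and your proposal must stand on its own. Your \emph{only if} direction is sound in outline: the reduction to the bounded operators $A_n=AE_A(\delta_n)$ via the mild-solution characterization in Remarks~\ref{remsws}, the identification $E_A(\delta_n)y(t)=\int_{\delta_n}e^{t\lambda}\,dE_A(\lambda)\,y(0)$, and the verification of conditions (i) and (ii) of Proposition~\ref{prop} by applying \eqref{cond(ii)} at the level of the truncations and passing to the limit (using the uniform bound $\|E_A(\delta_n)y(t)\|\le M\|y(t)\|$ from \eqref{bounded}) is a workable route.

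The genuine gap is in the \emph{if} direction, in the strong-continuity step. The dominating function $|t-s|\,|\lambda|\,e^{T|\lambda|}$ is in general \emph{not} $v(f,g^*,\cdot)$-integrable for $f\in\bigcap_{t\ge 0}D(e^{tA})$: membership in $D(e^{tA})$ controls the integrability of $|e^{t\lambda}|=e^{t\Rep\lambda}$, not of $e^{t|\lambda|}$. Demanding $\int_{\sigma(A)}e^{T|\lambda|}\,dv(f,g^*,\lambda)<\infty$ amounts to $f\in D(e^{T|A|})$, i.e., essentially to $f\in{\mathscr E}^{(1)}(A)$ --- and the whole point of the present paper (see \eqref{incl4} and Theorem~\ref{mr}) is that $\bigcap_{t>0}D(e^{t|A|})={\mathscr E}^{(1)}(A)$ is in general a \emph{proper} subspace of $\bigcap_{t\ge 0}D(e^{tA})$; consider, e.g., a self-adjoint $A$ unbounded below and bounded above, for which $\bigcap_{t\ge 0}D(e^{tA})=X$ while $D(e^{T|A|})\neq X$. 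Moreover, dominated convergence applied separately for each $g^*$ yields only \emph{weak} continuity; strong continuity requires a bound uniform over $\|g^*\|=1$. A correct argument splits the integral: on $\left\{|\lambda|\le R\right\}$ use $|e^{t\lambda}-e^{s\lambda}|\le |t-s|Re^{TR}$ together with \eqref{tv}, and on $\left\{|\lambda|> R\right\}$ use $|e^{t\lambda}|\le 1+|e^{T\lambda}|$ for $t\in[0,T]$ together with \eqref{cond(ii)}, so that the tail is bounded by $4M\left(\|E_A(\{|\lambda|>R\})f\|+\|E_A(\{|\lambda|>R\})e^{TA}f\|\right)$ uniformly in $t\in[0,T]$ and $\|g^*\|=1$ and vanishes as $R\to\infty$ by the strong continuity of the \textit{s.m.} The same caveat applies to differentiating under the integral sign: the difference quotients must be dominated by $|\lambda|\bigl(|e^{(t-\varepsilon)\lambda}|+|e^{(t+\varepsilon)\lambda}|\bigr)$, which is integrable for $g^*\in D(A^*)$ because $\int_{\sigma(A)}|\lambda||e^{u\lambda}|\,dv(f,g^*,\lambda)=\int_{\sigma(A)}|e^{u\lambda}|\,dv(f,A^*g^*,\lambda)<\infty$ by your measure identity and \eqref{cond(i)} --- but not by anything involving $e^{u|\lambda|}$.
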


\begin{rems}\
\begin{itemize}
\item Theorem \ref{GWS} generalizing {\cite[Theorem $3.1$]{Markin1999}}, its counterpart for a normal operator $A$ in a complex Hilbert space, in particular,
implies
\begin{itemize}
\item that the subspace $\bigcap_{t\ge 0}D(e^{tA})$ of all possible initial values of the weak solutions of equation \eqref{1} is the largest permissible for the exponential form given by \eqref{expf}, which highlights the naturalness of the notion of weak solution, and
\item that associated \textit{ACP} \eqref{ACP}, whenever solvable,  is solvable \textit{uniquely}.
\end{itemize} 
\item Observe that the initial-value subspace $\bigcap_{t\ge 0}D(e^{tA})$ of equation \eqref{1}, containing the dense in $X$ subspace $\bigcup_{\alpha>0}E_A(\Delta_\alpha)X$, where
\begin{equation*}
\Delta_\alpha:=\left\{\lambda\in\C\,\middle|\,|\lambda|\le \alpha \right\},\ \alpha>0,
\end{equation*}
which coincides with the class ${\mathscr E}^{\{0\}}(A)$ of the \textit{entire} vectors of $A$ of \textit{exponential type} (see below), is \textit{dense} in $X$.
\item When a scalar type spectral operator $A$ in a complex Banach space generates a $C_0$-semigroup $\left\{T(t) \right\}_{t\ge 0}$, 
\[
T(t)=e^{tA}\ \text{and}\ D(e^{tA})=X,\ t\ge 0,
\]
\cite{Markin2002(2)}, and hence, Theorem \ref{GWS} is consistent with the well-known description of the weak solutions for this setup (see \eqref{semigroup}).
\end{itemize} 
\end{rems} 

Subsequently, the frequent terms {\it ``spectral measure"} and {\it ``operational calculus"} are abbreviated to {\it s.m.} and {\it o.c.}, respectively.

\subsection{Gevrey Classes of Functions}\

\begin{defn}[Gevrey Classes of Functions]\ \\
Let $(X,\|\cdot\|)$ be a (real or complex) Banach space, $C^\infty(I,X)$ be the space of all $X$-valued functions strongly infinite differentiable on an interval $I\subseteq (-\infty,\infty)$, and $0\le \beta<\infty$.

The following subspaces of $C^\infty(I,X)$
\begin{align*}
{\mathscr E}^{\{\beta\}}(I,X):=\bigl\{g(\cdot)\in C^{\infty}(I, X) \bigm |&
\forall [a,b] \subseteq I\ \exists \alpha>0\ \exists c>0:
\\
&\max_{a \le t \le b}\|g^{(n)}(t)\| \le c\alpha^n [n!]^\beta,
\ n\in\Z_+\bigr\},\\
{\mathscr E}^{(\beta)}(I,X):= \bigl\{g(\cdot) \in C^{\infty}(I,X) \bigm |& 
\forall [a,b] \subseteq I\ \forall \alpha > 0 \ \exists c>0:
\\
&\max_{a \le t \le b}\|g^{(n)}(t)\| \le c\alpha^n [n!]^\beta,
\ n\in\Z_+\bigr\},
\end{align*}
are called the {\it $\beta$th-order Gevrey classes} of strongly ultradifferentiable vector functions on $I$ of {\it Roumieu} and {\it Beurling type}, respectively (see, e.g., \cite{Gevrey,Komatsu1,Komatsu2,Komatsu3}).
\end{defn}

\begin{rems}\
\begin{itemize}
\item In view of {\it Stirling's formula}, the 
sequence $\left\{ [n!]^\beta\right\}_{n=0}^\infty$ can be replaced with
$\left\{ n^{\beta n}\right\}_{n=0}^\infty$.
\item For $0\le\beta<\beta'<\infty$, the inclusions
\begin{equation*}
{\mathscr E}^{(\beta)}(I,X)\subseteq{\mathscr E}^{\{\beta\}}(I,X)
\subseteq {\mathscr E}^{(\beta')}(I,X)\subseteq
{\mathscr E}^{\{\beta'\}}(I,X)\subseteq C^{\infty}(I,X)
\end{equation*}
hold.
\item For $1<\beta<\infty$, the Gevrey classes
are \textit{non-quasianalytic} (see, e.g., \cite{Komatsu2}).
\item For $\beta=1$, ${\mathscr E}^{\{1\}}(I,X)$ 
is the class of all {\it analytic} on $I$, i.e., {\it analytically continuable} into complex neighborhoods of $I$, vector functions and ${\mathscr E}^{(1)}(I,X)$ is the class of all {\it entire}, i.e., allowing {\it entire} continuations, vector functions \cite{Mandel}.
\item For $0\le\beta<1$, the Gevrey class ${\mathscr E}^{\{\beta\}}(I,X)$ (${\mathscr E}^{(\beta)}(I,X)$) consists of all functions $g(\cdot)\in {\mathscr E}^{(1)}(I,X)$ such that, for some (any) $\gamma>0$, there is an $M>0$ for which
\begin{equation}\label{order}
\|g(z)\|\le Me^{\gamma|z|^{1/(1-\beta)}},\ z\in \C,
\end{equation}
\cite{Markin2001(2)}. In particular,
for $\beta=0$, ${\mathscr E}^{\{0\}}(I,X)$ and ${\mathscr E}^{(0)}(I,X)$ are the classes of entire vector functions of \textit{exponential} and \textit{minimal exponential type}, respectively (see, e.g., \cite{Levin}).
\end{itemize} 
\end{rems} 

\subsection{Gevrey Classes of Vectors}\

One can consider the Gevrey classes in a more general sense. 

\begin{defn}[Gevrey Classes of Vectors]\ \\
Let $(A,D(A))$ be a densely defined closed linear operator in a (real or complex) Banach space $(X,\|\cdot\|)$, $0\le \beta<\infty$, and
\begin{equation*}
C^{\infty}(A):=\bigcap_{n=0}^{\infty}D(A^n)
\end{equation*}
be the subspace of infinite differentiable vectors of $A$.

The following subspaces of $C^{\infty}(A)$
\begin{align*}
{\mathscr E}^{\{\beta\}}(A)&:=\left\{x\in C^{\infty}(A)\, \middle |\, 
\exists \alpha>0\ \exists c>0:
\|A^nx\| \le c\alpha^n [n!]^\beta,\ n\in\Z_+ \right\},\\
{\mathscr E}^{(\beta)}(A)&:=\left\{x \in C^{\infty}(A)\, \middle|\,\forall \alpha > 0 \ \exists c>0:
\|A^nx\| \le c\alpha^n [n!]^\beta,\ n\in\Z_+ \right\}
\end{align*}
are called the \textit{$\beta$th-order Gevrey classes} of ultradifferentiable vectors of $A$ of \textit{Roumieu} and \textit{Beurling type}, respectively (see, e.g., \cite{GorV83,Gor-Knyaz,book}).
\end{defn}

\begin{rems}\
\begin{itemize}
\item In view of {\it Stirling's formula}, the 
sequence $\left\{ [n!]^\beta\right\}_{n=0}^\infty$ can be replaced with
$\left\{ n^{\beta n}\right\}_{n=0}^\infty$.
\item For $0\le\beta<\beta'<\infty$, the inclusions
\begin{equation}\label{incl1}
{\mathscr E}^{(\beta)}(A)\subseteq{\mathscr E}^{\{\beta\}}(A)
\subseteq {\mathscr E}^{(\beta')}(A)\subseteq
{\mathscr E}^{\{\beta'\}}(A)\subseteq C^{\infty}(A)
\end{equation}
hold.
\item In particular, ${\mathscr E}^{\{1\}}(A)$ and ${\mathscr E}^{(1)}(A)$ are the classes of {\it analytic} and {\it entire} vectors of $A$, respectively \cite{Goodman,Nelson} and ${\mathscr E}^{\{0\}}(A)$ and ${\mathscr E}^{(0)}(A)$ are the classes of \textit{entire} vectors of $A$ of \textit{exponential} and \textit{minimal exponential type}, respectively (see, e.g., \cite{Radyno1983(1),Gor-Knyaz}).
\item In view of the \textit{closedness} of $A$, it is easily seen that the class ${\mathscr E}^{(1)}(A)$ forms the subspace of initial values in $X$ generating the (classical) solutions of \eqref{1}, which are entire vector functions represented by the power series
\begin{equation}\label{powser}
\sum_{n=0}^\infty \dfrac{t^n}{n!}A^nf,\ t\ge 0,
f\in {\mathscr E}^{(1)}(A),
\end{equation}
the classes ${\mathscr E}^{\{\beta\}}(A)$ and ${\mathscr E}^{(\beta)}(A)$ with $0\le\beta<1$ being the subspaces of such initial values for which the solutions satisfy growth estimate \eqref{order} with some (any) $\gamma>0$ and some $M>0$, respectively (cf. \cite{Levin}).
\end{itemize} 
\end{rems} 

As is shown in \cite{GorV83} (see also \cite{Gor-Knyaz,book}), if $0<\beta<\infty$, for a {\it normal operator} $A$ in a complex Hilbert space,
\begin{equation}\label{GC}
{\mathscr E}^{\{\beta\}}(A)=\bigcup_{t>0} D(e^{t|A|^{1/\beta}})\ \text{and}\ 
{\mathscr E}^{(\beta)}(A)=\bigcap_{t>0} D(e^{t|A|^{1/\beta}}),
\end{equation}
the operator exponentials $e^{t|A|^{1/\beta}}$, $t>0$, understood in the sense of the Borel operational calculus (see, e.g., \cite{Dun-SchII,Plesner}).

In \cite{Markin2004(2),Markin2015}, descriptions \eqref{GC} are extended  to \textit{scalar type spectral operators} in a complex Banach space, in which form they are basic for our discourse. In \cite{Markin2015}, similar nature descriptions of the classes ${\mathscr E}^{\{0\}}(A)$ and ${\mathscr E}^{(0)}(A)$ ($\beta=0$), known for a normal operator $A$ in a complex Hilbert space (see, e.g., \cite{Gor-Knyaz}), are also generalized to scalar type spectral operators in a complex Banach space. In particular {\cite[Theorem $5.1$]{Markin2015}},
\[
{\mathscr E}^{\{0\}}(A)=\bigcup_{\alpha>0}E_A(\Delta_\alpha)X,
\] 
where
\begin{equation*}
\Delta_\alpha:=\left\{\lambda\in\C\,\middle|\,|\lambda|\le \alpha \right\},\ \alpha>0.
\end{equation*}

We also need the following characterization of a particular weak solution's of equation \eqref{1} with a scalar type spectral operator $A$ in a complex Banach space being strongly Gevrey ultradifferentiable on a subinterval $I$ of $[0,\infty)$ proved in \cite{Markin2018(2)}.

\begin{prop}[{\cite[Proposition $3.1$]{Markin2018(2)}}]\label{particular}
Let $A$ be a scalar type spectral operator in a complex Banach space $(X,\|\cdot\|)$, $0\le \beta<\infty$, and $I$ be a subinterval of $[0,\infty)$. Then the restriction of
a weak solution $y(\cdot)$ of equation \eqref{1} to $I$ belongs to the Gevrey class ${\mathscr E}^{\{\beta\}}(I,X)$
\textup{(${\mathscr E}^{(\beta)}(I,X)$)} iff, for each $t\in I$, 
\begin{equation*}
y(t) \in {\mathscr E}^{\{\beta\}}(A)
\ \textup{(${\mathscr E}^{(\beta)}(A)$, respectively)},
\end{equation*}
in which case
\begin{equation*}
y^{(n)}(t)=A^ny(t),\ n\in \N,t\in I.
\end{equation*}
\end{prop}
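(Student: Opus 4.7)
My plan is to combine the representation of weak solutions given by Theorem~\ref{GWS} with the domain characterization of Proposition~\ref{prop} and the estimates \eqref{tv}--\eqref{cond(i)}. By Theorem~\ref{GWS}, any weak solution has the form $y(t)=e^{tA}f$ with $f\in\bigcap_{t\ge 0}D(e^{tA})$, so both sides of the claimed equivalence will reduce to integrability/growth properties of the spectral measures $\langle E_A(\cdot)f,g^*\rangle$.

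The central step, used in both implications, is the pointwise identification $y^{(n)}(t)=A^ny(t)$ on $I$, established by induction on $n$. For $n=1$, with $t\in I$ and small admissible $h$, the operational calculus yields
\begin{equation*}
\frac{y(t+h)-y(t)}{h}-Ay(t)=\int_{\sigma(A)}e^{t\lambda}\left[\frac{e^{h\lambda}-1}{h}-\lambda\right]dE_A(\lambda)f.
\end{equation*}
Pairing with $g^*\in X^*$, $\|g^*\|=1$, applying \eqref{cond(i)} together with Proposition~\ref{prop} (taking $\lambda e^{t\lambda}$ as the dominating function on $\{|\lambda|\ge 1\}$ and a trivial bound on $\{|\lambda|<1\}$), a dominated-convergence argument sends the right-hand side strongly to $0$ as $h\to 0$. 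The inductive step is the same identity with $A^{n-1}y(t)$ replacing $y(t)$; it requires the a priori membership $y(t)\in D(A^{n+1})$, which is supplied by the hypothesis $y(t)\in\mathscr{E}^{\{\beta\}}(A)\subseteq C^\infty(A)$ in the backward direction, and by the existence of $y^{(n+1)}(t)$ together with the same calculus argument in the forward direction.

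Granting the identification, the forward implication is immediate: the Gevrey bound on $\|y^{(n)}(t)\|$ at any single $t\in I$ gives $\|A^ny(t)\|\le c\alpha^n[n!]^\beta$, placing $y(t)$ in the respective Gevrey class of vectors. The backward implication needs the harder step of lifting the pointwise bounds on $\|A^ny(t)\|$, whose constants a priori depend on $t$, to bounds uniform on every compact $[a,b]\subseteq I$. The plan is to exploit the semigroup-like relation $y(t)=e^{(t-a)A}y(a)$ for $t\in[a,b]$, which via the operational calculus yields
\begin{equation*}
A^ny(t)=\int_{\sigma(A)}\lambda^ne^{(t-a)\lambda}dE_A(\lambda)y(a).
\end{equation*}
Splitting $\sigma(A)$ into $\{\Rep\lambda\le 0\}$ (where $|e^{(t-a)\lambda}|\le 1$) and $\{\Rep\lambda>0\}$ (where $|e^{(t-a)\lambda}|\le|e^{(b-a)\lambda}|$), and invoking \eqref{cond(ii)} on each piece, controls $\|A^ny(t)\|$ by a universal multiple of $\|A^ny(a)\|+\|A^ny(b)\|$. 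Since the Gevrey bound is assumed at the two endpoints, this produces the required uniform estimate (some $\alpha$ in the Roumieu case, every $\alpha>0$ in the Beurling case).

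The main obstacle is precisely this last step: ensuring that the a priori $t$-wise Gevrey control produces genuinely uniform constants on compact subintervals, since the hypothesis by itself permits $c(t),\alpha(t)$ to blow up as $t$ varies. The endpoint-sandwiching through the spectral-integral representation, together with the splitting of $\sigma(A)$ by the sign of $\Rep\lambda$, is what makes the uniformity go through and, I expect, will be the most delicate bookkeeping in the proof.
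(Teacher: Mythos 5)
You should first be aware that this paper contains no proof of Proposition \ref{particular}: it is imported verbatim from \cite{Markin2018(2)}, so there is no in-paper argument to compare against and your proposal has to stand on its own. Its architecture --- representing the solution as $y(t)=e^{tA}f$ via Theorem \ref{GWS}, proving $y^{(n)}(t)=A^ny(t)$ by induction, and then upgrading pointwise Gevrey estimates to uniform ones on compacta --- is the same operational-calculus route used throughout this series of papers, and your endpoint-sandwich step is correct and is the right way to get uniformity: for $t\in[a,b]$, splitting $\sigma(A)$ at $\Rep\lambda=0$ and applying \eqref{cond(i)}, \eqref{cond(ii)}, and \eqref{bounded} to $A^ny(t)=\bigl(\lambda^ne^{(t-a)\lambda}\bigr)(A)\,y(a)$ gives $\|A^ny(t)\|\le 4M\|A^ny(a)\|+4M^2\|A^ny(b)\|$, which settles both the Roumieu and the Beurling bookkeeping.

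The genuine gap is in the differentiation step, in two respects. First, your proposed dominant $\lambda e^{t\lambda}$ on $\{|\lambda|\ge 1\}$ does not dominate $e^{t\lambda}\bigl[(e^{h\lambda}-1)/h-\lambda\bigr]$ uniformly in $h$: from $(e^{h\lambda}-1)/h=\frac1h\int_0^h\lambda e^{s\lambda}\,ds$ one only gets $|(e^{h\lambda}-1)/h|\le|\lambda|\max\bigl(1,e^{h\Rep\lambda}\bigr)$, so for $\Rep\lambda>0$ and $h>0$ the integrand is of size $|\lambda|e^{(t+h)\Rep\lambda}$, and the correct dominant for $|h|\le\delta$ is $|\lambda|\bigl(e^{(t+\delta)\Rep\lambda}+e^{(t-\delta)\Rep\lambda}+e^{t\Rep\lambda}\bigr)$; its $v(f,g^*,\cdot)$-integrability requires $y(t\pm\delta)\in D(A)$, i.e., the hypothesis at \emph{neighboring} points of $I$, not just at $t$ --- available here, but nowhere invoked in your write-up. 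Second, pairing with individual $g^*$ and applying dominated convergence yields only \emph{weak} differentiability, whereas the Gevrey classes are defined through strong derivatives; since $\|F(A)f\|\le\sup_{\|g^*\|=1}\int_{\sigma(A)}|F(\lambda)|\,dv(f,g^*,\lambda)$, you must make the convergence uniform over the unit ball of $X^*$, and that is exactly what condition (ii) of Proposition \ref{prop}, applied to the dominating function to make the tails over $\{|\lambda|>n\}$ uniformly small, is for. You cite the right tools, but as written the argument stops at weak convergence, so the central identity $y'(t)=Ay(t)$ (and hence both implications) is not yet established in the strong sense. Both defects are repairable with the standard estimates of the Preliminaries.
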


\section{One Lemma}

The following lemma generalizes {\cite[Lemma $4.1$]{Markin2001(2)}}, its counterpart for a normal operator in a complex Hilbert space and, besides being an interesting result by itself, is necessary for proving our main statement. 

\begin{lem}\label{bbounded}
Let $A$ be a scalar type spectral operator in a complex Banach space $(X,\|\cdot\|)$ with spectral measure $E_A(\cdot)$ and $0<\beta'<\infty$. If 
\begin{equation*}
\bigcup_{0\le \beta<\beta'}{\mathscr E}^{\{\beta\}}(A)={\mathscr E}^{(\beta')}(A),
\end{equation*}
then the operator $A$ is bounded.
\end{lem}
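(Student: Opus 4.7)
The plan is to argue by contraposition: assuming $A$ is unbounded, I will exhibit
\[
f\in\mathscr{E}^{(\beta')}(A)\setminus\bigcup_{0\le\beta<\beta'}\mathscr{E}^{\{\beta\}}(A),
\]
which contradicts the hypothesis (the reverse inclusion being automatic from \eqref{incl1}). I work with the characterizations $\mathscr{E}^{(\beta')}(A)=\bigcap_{t>0}D(e^{t|A|^{1/\beta'}})$ and $\mathscr{E}^{\{\beta\}}(A)=\bigcup_{t>0}D(e^{t|A|^{1/\beta}})$ for $0<\beta<\beta'$ recalled in the Preliminaries; since $\mathscr{E}^{\{0\}}(A)\subseteq\mathscr{E}^{\{\beta\}}(A)$ for every $\beta>0$ by \eqref{incl1}, it suffices to keep $f$ out of $\mathscr{E}^{\{\beta\}}(A)$ for all $\beta\in(0,\beta')$.

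The unboundedness of the scalar type spectral operator $A$ forces its spectrum $\sigma(A)$, which is the support of $E_A(\cdot)$, to be unbounded, so I select $\lambda_n\in\sigma(A)$ with $|\lambda_n|\to\infty$ as rapidly as I wish. Around each $\lambda_n$ I take pairwise disjoint small closed disks $\Delta_n$ with $E_A(\Delta_n)\ne 0$ (possible because $\lambda_n$ lies in the support of $E_A(\cdot)$) and unit vectors $e_n\in E_A(\Delta_n)X$, then set
\[
f:=\sum_{n=1}^\infty c_n e_n,\qquad c_n:=\exp\bigl(-|\lambda_n|^{1/\beta'}\log n\bigr),
\]
with $|\lambda_n|$ chosen fast enough that $\sum_n|c_n|<\infty$, giving norm convergence of $f$ in $X$. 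The choice of $c_n$ is tuned so that for every $t>0$ the quantity $c_n e^{t|\lambda_n|^{1/\beta'}}=\exp(|\lambda_n|^{1/\beta'}(t-\log n))$ is summable in $n$, whereas for every $\beta\in(0,\beta')$ and every $s>0$ the quantity $c_n e^{s|\lambda_n|^{1/\beta}}=\exp\bigl(|\lambda_n|^{1/\beta'}(s|\lambda_n|^{1/\beta-1/\beta'}-\log n)\bigr)$ tends to $\infty$, since $|\lambda_n|^{1/\beta-1/\beta'}$ eventually swamps $\log n$.

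To verify $f\in\bigcap_{t>0}D(e^{t|A|^{1/\beta'}})$, I apply Proposition \ref{prop} to $F(\lambda)=e^{t|\lambda|^{1/\beta'}}$: disjointness of the $\Delta_n$, together with \eqref{decompose} and \eqref{tv}, reduces the integral in (i) to a series dominated by $4M\sum_n|c_n|\sup_{\lambda\in\Delta_n}e^{t|\lambda|^{1/\beta'}}$, summable by the first estimate above, and the same tails yield the supremum-over-the-unit-sphere condition (ii). For the non-membership $f\notin D(e^{s|A|^{1/\beta}})$, suppose the contrary and apply \eqref{cond(ii)} with $\delta=\Delta_n$ and $F(\lambda)=e^{s|\lambda|^{1/\beta}}$; since $E_A(\Delta_n)f=c_n e_n$, this gives, for every $g^*\in X^*$,
\[
c_n\int_{\Delta_n}e^{s|\lambda|^{1/\beta}}\,dv(e_n,g^*,\lambda)\le 4M\bigl\|e^{s|A|^{1/\beta}}(c_n e_n)\bigr\|\,\|g^*\|.
\]
A Hahn-Banach choice of $g_n^*\in X^*$ of unit norm with a uniform lower bound on $|\langle e_n,g_n^*\rangle|$ converts this into $\|e^{s|A|^{1/\beta}}(c_n e_n)\|\gtrsim c_n e^{s|\lambda_n|^{1/\beta}}\to\infty$, contradicting the required boundedness of $\|E_A(\Delta_n)e^{s|A|^{1/\beta}}f\|$ in $n$.

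The hard part will be extracting the lower bound $\|e^{s|A|^{1/\beta}}(c_n e_n)\|\gtrsim c_n e^{s|\lambda_n|^{1/\beta}}$ in the Banach space setting: in the Hilbert/normal counterpart \cite{Markin2001(2)} it drops out of orthogonality of the spectral projections, but here it has to be manufactured via duality, choosing functionals $g_n^*$ of unit norm satisfying a uniform lower bound $|\langle e_n,g_n^*\rangle|\ge c>0$ and compatible with the projections $E_A(\Delta_n)$. Once this estimate is in place, the rest is essentially bookkeeping with Proposition \ref{prop}, \eqref{decompose}, and \eqref{cond(i)}--\eqref{cond(ii)}.
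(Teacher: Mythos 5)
Your proposal is correct in outline and, importantly, it is actually completable --- but it takes a genuinely different route from the paper in the one place where the Banach-space setting bites. Both arguments share the same skeleton: contrapositive, pairwise disjoint spectral patches $\Delta_n$ around spectral points escaping to infinity, unit vectors $e_n\in E_A(\Delta_n)X$, an $\ell_1$-combination $f=\sum_n c_n e_n$ with coefficients tuned so that Proposition \ref{prop} puts $f$ into $\bigcap_{t>0}D(e^{t|A|^{1/\beta'}})={\mathscr E}^{(\beta')}(A)$ while keeping it out of every $D(e^{s|A|^{1/\beta}})$, $\beta<\beta'$. The divergence is in how non-membership is detected. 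The paper first proves a uniform separation $d_k=\dist(e_k,\spa\{e_i: i\neq k\})\ge\varepsilon$ (itself a consequence of the uniform bound \eqref{bounded} on the spectral measure), uses Hahn--Banach to build a biorthogonal family $\{e_k^*\}$, assembles the \emph{single} functional $h^*=\sum_k n(k)^{-2}e_k^*$, and shows that condition (i) of Proposition \ref{prop} fails for $f$ against that one $h^*$. You instead localize: assuming $f\in D(e^{s|A|^{1/\beta}})$, you apply \eqref{cond(ii)} with $\delta=\Delta_n$ and a \emph{separate} functional $g_n^*$ for each $n$, and derive a contradiction from the boundedness of $\|E_A(\Delta_n)e^{s|A|^{1/\beta}}f\|$ in $n$. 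What you flag as ``the hard part'' is in fact immediate and requires none of the paper's machinery: take $g_n^*$ to be the Hahn--Banach norming functional of $e_n$, so $\|g_n^*\|=1$ and $\langle e_n,g_n^*\rangle=\|e_n\|=1$, a uniform lower bound for free; no ``compatibility with the projections'' is needed because \eqref{cond(ii)} already restricts the integral to $\Delta_n$, where $v(f,g_n^*,\cdot)=c_n\,v(e_n,g_n^*,\cdot)$ and hence
\begin{equation*}
c_n\,e^{s\bigl(\inf_{\lambda\in\Delta_n}|\lambda|\bigr)^{1/\beta}}
\le c_n\,e^{s(\cdot)^{1/\beta}}\bigl|\langle E_A(\Delta_n)e_n,g_n^*\rangle\bigr|
\le\int\limits_{\Delta_n}e^{s|\lambda|^{1/\beta}}\,dv(f,g_n^*,\lambda)
\le 4M\bigl\|E_A(\Delta_n)e^{s|A|^{1/\beta}}f\bigr\|
\le 4M^2\bigl\|e^{s|A|^{1/\beta}}f\bigr\|,
\end{equation*}
whose left-hand side tends to infinity. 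Your route thus bypasses the paper's distance estimate \eqref{dist1} and the biorthogonal construction altogether, at the cost of using a sequence of functionals rather than exhibiting one fixed $g^*$ violating condition (i); the paper's version yields the slightly more explicit conclusion that a single $h^*$ witnesses the failure. Two small bookkeeping points you should make explicit: the coefficients $c_n=n^{-|\lambda_n|^{1/\beta'}}$ are summable only after insisting, say, $|\lambda_n|\ge 2^{\beta'}$, and the claim that $|\lambda_n|^{1/\beta-1/\beta'}$ swamps $\log n$ for \emph{every} $\beta\in(0,\beta')$ needs $|\lambda_n|$ to grow at least like $n$ --- both are covered by your freedom in choosing the $\lambda_n$, but they are where the paper's explicit choices \eqref{seq}--\eqref{or} do real work.
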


\begin{proof}\quad
First, observe that, in view of inclusions \eqref{incl1},
for any $\beta'>0$,
\begin{equation}\label{incl2}
\bigcup_{0\le \beta<\beta'}{\mathscr E}^{\{\beta\}}(A)\subseteq {\mathscr E}^{(\beta')}(A).
\end{equation}

Let us prove the statement \textit{by contrapositive} assuming $A$ to be \textit{unbounded}.

The operator $A$ being scalar type spectral, this assumption implies that the spectrum $\sigma(A)$ of $A$ is an \textit{unbounded} set in the complex plane $\C$
\cite{Survey58,Dun-SchIII}. Hence, the points of the spectrum can be found in \textit{infinitely many} semi-open annuli of the form
\begin{equation*}
\delta_n:=\left\{ \lambda \in \C\,\middle|\,n\le |\lambda|<n+1 \right\},\ n\in\N,
\end{equation*}
i.e., there is a sequence of natural numbers
$\{n(k)\}_{k=1}^\infty$ such that
\begin{equation}\label{seq}
k\le n(k)<n(k+1),\ k\in\N,
\end{equation}
and, for each $k\in\N$, there is a
\begin{equation*}
\lambda_k\in \delta_{n(k)}\cap \sigma(A)\neq \emptyset.
\end{equation*}

Setting $\varepsilon_0:=1$, for each $k\in\N$,
one can choose an 
\begin{equation}\label{eps}
0<\varepsilon_k<\min\left(1/k,\varepsilon_{k-1}\right)
\end{equation}
such that
\begin{equation}\label{or}
\lambda_k\in \Delta_k:=\left\{\lambda \in \C\, \middle|\, n(k)-\varepsilon_k <|\lambda|<n(k)+1
-\varepsilon_k\right\}.
\end{equation}

Since
\[
n(k)<n(k+1)\ \text{and}\ \varepsilon_{k+1}<\varepsilon_{k},\ k\in\N,
\]
we have:
\[
n(k)+1-\varepsilon_{k}<n(k+1)-\varepsilon_{k+1},\ k\in \N,
\]
and hence, the \textit{open} annuli $\Delta_k$, $k\in\N$, are {\it pairwise disjoint}.

Whence, by the properties of the {\it s.m.}, 
\begin{equation*}
E_A(\Delta_i)E_A(\Delta_j)=0,\ i\neq j,
\end{equation*}
where $0$ stands for the \textit{zero operator} on $X$.

Observe also, that the subspaces $E_A(\Delta_k)X$, $k\in \N$, are \textit{nontrivial} since
\[
\Delta_k \cap \sigma(A)\neq \emptyset,\ k\in\N,
\]
with $\Delta_k$ being an \textit{open set} in $\C$. 

By choosing a unit vector $e_k\in E_A(\Delta_k)X$ for each $k\in\N$, we obtain a sequence 
$\left\{e_n\right\}_{n=1}^\infty$ in $X$ such that
\begin{equation}\label{ortho1}
\|e_k\|=1,\ k\in\N,\ \text{and}\ E_A(\Delta_i)e_j=\delta_{ij}e_j,\ i,j\in\N,
\end{equation}
where $\delta_{ij}$ is the \textit{Kronecker delta}.

As is easily seen, \eqref{ortho1} implies that the vectors $e_k$, $k\in\N$, are \textit{linearly independent}.

Furthermore, there is an $\varepsilon>0$ such that
\begin{equation}\label{dist1}
d_k:=\dist\left(e_k,\spa\left(\left\{e_i\,|\,i\in\N,\ i\neq k\right\}\right)\right)\ge\varepsilon,\ k\in\N.
\end{equation}

Indeed, the opposite implies the existence of a subsequence $\left\{d_{k(m)}\right\}_{m=1}^\infty$ such that
\begin{equation*}
d_{k(m)}\to 0,\ m\to\infty.
\end{equation*}

Then, by selecting a vector
\[
f_{k(m)}\in 
\spa\left(\left\{e_i\,|\,i\in\N,\ i\neq k(m)\right\}\right),\ m\in\N,
\] 
such that 
\[
\|e_{k(m)}-f_{k(m)}\|<d_{k(m)}+1/m,\ m\in\N,
\]
we arrive at
\begin{multline*}
1=\|e_{k(m)}\|
\hfill
\text{since, by \eqref{ortho1}, 
$E_A(\Delta_{k(m)})f_{k(m)}=0$;}
\\
\shoveleft{
=\|E_A(\Delta_{k(m)})(e_{k(m)}-f_{k(m)})\|\
\le \|E_A(\Delta_{k(m)})\|\|e_{k(m)}-f_{k(m)}\|
\hfill
\text{by \eqref{bounded};}
}\\
\ \
\le M\|e_{k(m)}-f_{k(m)}\|\le M\left[d_{k(m)}+1/m\right]
\to 0,\ m\to\infty,
\hfill
\end{multline*}
which is a \textit{contradiction} proving \eqref{dist1}. 

As follows from the {\it Hahn-Banach Theorem}, for any $k\in\N$, there is an $e^*_k\in X^*$ such that 
\begin{equation}\label{H-B1}
\|e_k^*\|=1,\ k\in\N,\ \text{and}\ \langle e_i,e_j^*\rangle=\delta_{ij}d_i,\ i,j\in\N.
\end{equation}

Let
\begin{equation*}
f:=\sum_{k=1}^\infty n(k)^{-(n(k)+1-\varepsilon_k)^{1/\beta'}}e_k\in X
\ \text{and}\ 
h:=\sum_{k=1}^\infty n(k)^{-\frac{1}{2}(n(k)+1-\varepsilon_k)^{1/\beta'}}e_k\in X,
\end{equation*}
the elements being well defined since $\|e_k\|=1$, $k\in\N$ (see \eqref{ortho1}) and
\[
\left\{n(k)^{-(n(k)+1-\varepsilon_k)^{1/\beta'}}\right\}_{k=1}^\infty,
\left\{n(k)^{-\frac{1}{2}(n(k)+1-\varepsilon_k)
^{1/\beta'}}\right\}_{k=1}^\infty
\in l_1
\] 
($l_1$ is the space of absolutely summable sequences). 

Indeed, in view of \eqref{seq} and \eqref{eps},
for all $k\in\N$ sufficiently large so that 
\[
n(k)\ge 4^{\beta'},
\]
we have:
\begin{equation}\label{est}
n(k)^{-(n(k)+1-\varepsilon_k)^{1/\beta'}}\le n(k)^{-4}\le k^{-4}.
\end{equation}

In view of \eqref{ortho1}, by the properties of the \textit{s.m.},
\begin{equation}\label{vectors1}
E_A(\cup_{k=1}^\infty\Delta_k)f=f\ \text{and}\ E_A(\Delta_k)f=n(k)^{-(n(k)+1-\varepsilon_k)^{1/\beta'}}e_k,\ k\in\N.
\end{equation}
and
\begin{equation}\label{vectors2}
E_A(\cup_{k=1}^\infty\Delta_k)h=h\ \text{and}\
E_A(\Delta_k)h=n(k)^{-\frac{1}{2}(n(k)+1-\varepsilon_k)
^{1/\beta'}}e_k,\ k\in\N.
\end{equation}

For an arbitrary $t>0$ and any $g^*\in X^*$,
\begin{multline}\label{first1}
\int\limits_{\sigma(A)}e^{t|\lambda|^{1/\beta'}}\,dv(f,g^*,\lambda)
\hfill \text{by \eqref{vectors1};}
\\
\shoveleft{
=\int\limits_{\sigma(A)} e^{t|\lambda|^{1/\beta'}}\,d v(E_A(\cup_{k=1}^\infty \Delta_k)f,g^*,\lambda)
\hfill
\text{by \eqref{decompose};}
}\\
\shoveleft{
=\sum_{k=1}^\infty\int\limits_{\sigma(A)\cap\Delta_k}e^{t|\lambda|^{1/\beta'}}\,dv(E_A(\Delta_k)f,g^*,\lambda)
\hfill 
\text{by \eqref{vectors1};}
}\\
\shoveleft{
=\sum_{k=1}^\infty n(k)^{-(n(k)+1-\varepsilon_k)^{1/\beta'}}\int\limits_{\sigma(A)\cap\Delta_k}e^{t|\lambda|^{1/\beta'}}\,dv(e_k,g^*,\lambda)
}\\
\hfill
\text{since, by \eqref{or}, for $\lambda\in \Delta_k$,
$|\lambda| < n(k)+1-\varepsilon_k$;}
\\
\shoveleft{
\le\sum_{k=1}^\infty n(k)^{-(n(k)+1-\varepsilon_k)^{1/\beta'}}e^{t(n(k)+1-\varepsilon_k)^{1/\beta'}}\int\limits_{\sigma(A)\cap\Delta_k}1\,dv(e_k,g^*,\lambda)
}\\
\shoveleft{
=\sum_{k=1}^\infty e^{-\ln n(k)(n(k)+1-\varepsilon_k)^{1/\beta'}}e^{t(n(k)+1-\varepsilon_k)^{1/\beta'}}v(e_k,g^*,\Delta_k)
}\\
\shoveleft{
\le \sum_{k=1}^\infty e^{-(\ln n(k)-t)(n(k)+1-\varepsilon_k)^{1/\beta'}}v(e_k,g^*,\Delta_k)
\hfill
\text{by \eqref{tv};}
}\\
\shoveleft{
\le \sum_{k=1}^\infty e^{-(\ln n(k)-t)(n(k)+1-\varepsilon_k)^{1/\beta'}}4M\|e_k\|\|g^*\|
}\\
\hspace{1.2cm}
=4M\|g^*\|\sum_{k=1}^\infty e^{-(\ln n(k)-t)(n(k)+1-\varepsilon_k)^{1/\beta'}}
<\infty.
\hfill
\end{multline} 

Indeed, in view of \eqref{seq} and \eqref{eps}, for all $k\in\N$ sufficiently large so that
\[
\ln n(k)-t\ge 1,
\]
we have: 
\begin{equation*}
e^{-(\ln n(k)-t)(n(k)+1-\varepsilon_k)^{1/\beta'}
}\le e^{-k^{1/\beta'}}.
\end{equation*}

Similarly, for any $t>0$ and $n\in\N$,
\begin{multline}\label{second1}
\sup_{\{g^*\in X^*\,|\,\|g^*\|=1\}}
\int\limits_{\left\{\lambda\in\sigma(A)\,\middle|\,e^{t|\lambda|^{1/\beta'}}>n\right\}} 
e^{s|\lambda|^{1/\beta'}}\,dv(f,g^*,\lambda)
\\
\shoveleft{
\le 
\sup_{\{g^*\in X^*\,|\,\|g^*\|=1\}}\sum_{k=1}^\infty e^{-(\ln n(k)-t)(n(k)+1-\varepsilon_k)^{1/\beta'}}
\int\limits_{\left\{\lambda\in\sigma(A)\,\middle|\,e^{t|\lambda|^{1/\beta'}}>n\right\}\cap \Delta_k}1\,dv(e_k,g^*,\lambda) 
}\\
\shoveleft{
=\sup_{\{g^*\in X^*\,|\,\|g^*\|=1\}}\sum_{k=1}^\infty
e^{-\left(\frac{1}{2}\ln n(k)-t\right)(n(k)+1-\varepsilon_k)^{1/\beta'}}
e^{-\frac{1}{2}\ln n(k)(n(k)+1-\varepsilon_k)^{1/\beta'}}
}\\
\shoveleft{
\int\limits_{\left\{\lambda\in\sigma(A)\,\middle|\,e^{t|\lambda|^{1/\beta'}}>n\right\}\cap \Delta_k}1\,dv(e_k,g^*,\lambda)
}\\
\hfill
\text{since, by \eqref{seq} and \eqref{eps}, there is an $L>0$ such that}
\\
\hfill
\text{$e^{-\left(\frac{1}{2}\ln n(k)-t\right)(n(k)+1-\varepsilon_k)^{1/\beta'}}\le L$, $k\in\N$;}
\\
\shoveleft{
\le L\sup_{\{g^*\in X^*\,|\,\|g^*\|=1\}}\sum_{k=1}^\infty e^{-\frac{1}{2}\ln n(k)(n(k)+1-\varepsilon_k)^{1/\beta'}}
\int\limits_{\left\{\lambda\in\sigma(A)\,\middle|\,e^{t|\lambda|^{1/\beta'}}>n\right\}\cap \Delta_k}1\,dv(e_k,g^*,\lambda)
}\\
\shoveleft{
=L\sup_{\{g^*\in X^*\,|\,\|g^*\|=1\}}\sum_{k=1}^\infty {n(k)}^{-\frac{1}{2}(n(k)+1-\varepsilon_k)^{1/\beta'}}
\int\limits_{\left\{\lambda\in\sigma(A)\,\middle|\,e^{t|\lambda|^{1/\beta'}}>n\right\}\cap \Delta_k}1\,dv(e_k,g^*,\lambda)
}\\
\hfill
\text{by \eqref{vectors2};}
\\
\shoveleft{
=L\sup_{\{g^*\in X^*\,|\,\|g^*\|=1\}}\sum_{k=1}^\infty 
\int\limits_{\left\{\lambda\in\sigma(A)\,\middle|\,e^{t|\lambda|^{1/\beta'}}>n\right\}\cap \Delta_k}1\,dv(E_A(\Delta_k)h,g^*,\lambda)
}\\
\hfill
\text{by \eqref{decompose};}
\\
\shoveleft{
=L\sup_{\{g^*\in X^*\,|\,\|g^*\|=1\}} 
\int\limits_{\left\{\lambda\in\sigma(A)\,\middle|\,e^{t|\lambda|^{1/\beta'}}>n\right\}}1\,dv(E_A(\cup_{k=1}^\infty\Delta_k)h,g^*,\lambda)
}\\
\hfill
\text{by \eqref{vectors2};}
\\
\shoveleft{
= L\sup_{\{g^*\in X^*\,|\,\|g^*\|=1\}}\int\limits_{\left\{\lambda\in\sigma(A)\,\middle|\,e^{t|\lambda|^{1/\beta'}}>n\right\}}1\,dv(h,g^*,\lambda)
\hfill
\text{by \eqref{cond(ii)};}
}\\
\shoveleft{
\le L\sup_{\{g^*\in X^*\,|\,\|g^*\|=1\}}4M
\left\|E_A\left(\left\{\lambda\in\sigma(A)\,\middle|\,e^{t|\lambda|^{1/\beta'}}>n\right\}\right)h\right\|\|g^*\|
}\\
\shoveleft{
\le 4LM\left\|E_A\left(\left\{\lambda\in\sigma(A)\,\middle|\,e^{t|\lambda|^{1/\beta'}}>n\right\}\right)h\right\|
}\\
\hfill
\text{by the strong continuity of the {\it s.m.};}
\\
\hspace{1.2cm}
\to 4LM\left\|E_A\left(\emptyset\right)h\right\|=0,\ n\to\infty.
\hfill
\end{multline}

By Proposition \ref{prop} and \eqref{GC}, \eqref{first1} and \eqref{second1} jointly imply that
\begin{equation}\label{in}
f\in \bigcap_{t>0} D(e^{t|A|^{1/\beta'}})
={\mathscr E}^{(\beta')}(A).
\end{equation}

Let
\begin{equation}\label{functional1}
h^*:=\sum_{k=1}^\infty {n(k)}^{-2}e_k^*\in X^*,
\end{equation}
the functional being well defined since, by \eqref{seq}, $\{{n(k)}^{-2}\}_{k=1}^\infty\in l_1$ and $\|e_k^*\|=1$, $k\in\N$ (see \eqref{H-B1}).

In view of \eqref{H-B1} and \eqref{dist1}, we have:
\begin{equation}\label{funct-dist1}
\langle e_k,h^*\rangle=\langle e_k,{n(k)}^{-2}e_k^*\rangle=d_k {n(k)}^{-2}\ge \varepsilon {n(k)}^{-2},\ n\in\N.
\end{equation}

Fixing an arbitrary $0<\beta<\beta'$, for any $t>0$, we have:
\begin{multline}\label{nnotin}
\int\limits_{\sigma(A)}e^{t|\lambda|^{1/\beta}}\,dv(f,h^*,\lambda)
\hfill \text{by \eqref{decompose} as in \eqref{first1};}
\\
\shoveleft{
=\sum_{k=1}^\infty n(k)^{-(n(k)+1-\varepsilon_k)^{1/\beta'}}\int\limits_{\sigma(A)\cap\Delta_k}e^{t|\lambda|^{1/\beta}}\,dv(e_k,h^*,\lambda)
}\\
\hfill
\text{since, by \eqref{or} and \eqref{eps}, for $\lambda\in \Delta_k$,
$|\lambda| > n(k)-\varepsilon_k>0$;}
\\
\shoveleft{
\ge \sum_{k=1}^\infty e^{-\ln n(k)(n(k)+1-\varepsilon_k)^{1/\beta'}}e^{t(n(k)-\varepsilon_k)^{1/\beta}}v(e_k,h^*,\Delta_k)
\hfill
\text{by \eqref{eps};}
}\\
\shoveleft{
\ge \sum_{k=1}^\infty e^{t(k(n)-1)^{1/\beta}
-\ln k(n)(k(n)+1)^{1/\beta'}}|\langle E_A(\Delta_k)e_k,h^*\rangle|
\hfill
\text{by \eqref{funct-dist1};}
}\\
\shoveleft{
\ge \sum_{k=1}^\infty e^{t(k(n)-1)^{1/\beta}
-\ln k(n)(k(n)+1)^{1/\beta'}}\varepsilon {n(k)}^{-2}
}\\
\hspace{1.2cm}
=\sum_{k=1}^\infty \varepsilon e^{t(k(n)-1)^{1/\beta}
-\ln k(n)(k(n)+1)^{1/\beta'}-2\ln n(k)}=\infty.
\hfill
\end{multline} 

Indeed, for $k\ge 2$,
\begin{multline*}
t(n(k)-1)^{1/\beta}-\ln n(k)(n(k)+1)^{1/\beta'}-2\ln n(k)
\\
\shoveleft{
=(n(k)-1)^{1/\beta}
\left[t-\left[\dfrac{n(k)+1}{n(k)-1}\right]^{1/\beta'}
\dfrac{\ln n(k)}{(n(k)-1)^{1/\beta-1/\beta'}} 
-\dfrac{2\ln n(k)}{(n(k)-1)^{1/\beta}}\right]
}\\
\ \
\to \infty,\ k\to \infty,
\hfill
\end{multline*}

By Proposition \ref{prop} and \eqref{GC},
\eqref{nnotin} implies that, for each $0<\beta<\beta'$, 
\[
f\notin \bigcup_{t>0} D(e^{t|A|^{1/\beta}})
={\mathscr E}^{\{\beta\}}(A).
\]

Whence, in view of inclusions \eqref{incl1}, we infer that
\begin{equation}\label{notin}
f\not\in \bigcup_{0\le\beta<\beta'}{\mathcal E}^{\{\beta\}}(A).
\end{equation}

Comparing \eqref{in} and \eqref{notin}, we conclude that
\begin{equation*}
\bigcup_{0\le\beta<\beta'}{\mathcal E}^{\{\beta\}}(A)
\neq {\mathcal E}^{(\beta')}(A),
\end{equation*}
which completes the proof by contrapositive.
\end{proof}

\section{Main Result}

Lemma \ref{bbounded} affords a
rather short proof for the following  

\begin{thm}\label{mr}
Let $A$ be a scalar type spectral operator in a complex Banach space $(X,\|\cdot\|)$. If every weak solution of equation \eqref{1} belongs to the $\beta$th-order Roumieu type Gevrey class ${\mathscr E}^{\{\beta \}}\left((0,+\infty),X\right)$ with $0\le \beta<1$ (each one to its own), then the operator $A$ is bounded, and hence, all weak solutions of \eqref{1} are necessarily entire vector functions of exponential type.
\end{thm}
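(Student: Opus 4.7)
The plan is to apply Lemma~\ref{bbounded} with $\beta' = 1$ via contrapositive. Assume, toward contradiction, that $A$ is unbounded. By the inclusions \eqref{incl1} one has $\bigcup_{0 \le \beta < 1}\mathscr{E}^{\{\beta\}}(A) \subseteq \mathscr{E}^{(1)}(A)$ unconditionally, so it suffices to show that the hypothesis forces the reverse inclusion: the resulting equality $\bigcup_{0 \le \beta < 1}\mathscr{E}^{\{\beta\}}(A) = \mathscr{E}^{(1)}(A)$ would, by Lemma~\ref{bbounded}, compel $A$ to be bounded---contradicting the assumption.

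To establish the reverse inclusion, fix an arbitrary $f \in \mathscr{E}^{(1)}(A)$. Since $\|A^n f\| \le c_\alpha \alpha^n n!$ for every $\alpha > 0$, the power series $\sum_{n=0}^\infty z^n A^n f / n!$ converges absolutely in $X$ for every $z \in \C$ and defines an entire vector function $\tilde y : \C \to X$ whose restriction to $[0,\infty)$ is a classical---hence weak---solution of \eqref{1} with initial value $f$. By Theorem~\ref{GWS}, this restriction equals $e^{tA}f$, so in particular $f \in \bigcap_{t \ge 0} D(e^{tA})$ and, directly from the series, $\tilde y^{(n)}(0) = A^n f$ for every $n \in \Z_+$.

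By hypothesis, $\tilde y|_{(0,\infty)} \in \mathscr{E}^{\{\beta\}}((0,\infty), X)$ for some $\beta = \beta(f) \in [0,1)$. The growth characterization \eqref{order} (together with uniqueness of analytic continuation) then gives
\[
\|\tilde y(z)\| \le M e^{\gamma |z|^{1/(1-\beta)}}, \quad z \in \C,
\]
for some $M, \gamma > 0$. Applying Cauchy's estimate on the circle $|z| = r$ yields
\[
\|A^n f\| = \|\tilde y^{(n)}(0)\| \le \frac{n!}{r^n}\, M e^{\gamma r^{1/(1-\beta)}}, \quad r > 0.
\]
Minimizing the right-hand side over $r > 0$ (the minimum is attained at $r = [(1-\beta) n / \gamma]^{1-\beta}$) and invoking Stirling's formula to pass from $n^{\beta n}$ to $[n!]^\beta$ produces $\|A^n f\| \le c \alpha^n [n!]^\beta$ for suitable $c, \alpha > 0$, i.e., $f \in \mathscr{E}^{\{\beta\}}(A) \subseteq \bigcup_{0 \le \beta' < 1} \mathscr{E}^{\{\beta'\}}(A)$.

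Since $f \in \mathscr{E}^{(1)}(A)$ was arbitrary, the required reverse inclusion holds, producing the forbidden equality; by Lemma~\ref{bbounded} the operator $A$ is then bounded, contradicting the assumption. Once $A$ is known to be bounded, the exponential $e^{zA} = \sum_{n=0}^\infty z^n A^n / n!$ is an $L(X)$-valued entire function of exponential type $\|A\|$, so every weak solution $y(t) = e^{tA} f$ with $f \in X$ extends to an entire vector function of exponential type. The only substantive step is the Cauchy-estimate transfer from the function-space growth bound \eqref{order} to the vector-space Gevrey estimate on $\|A^n f\|$; all the genuinely delicate spectral-theoretic work has been packaged inside Lemma~\ref{bbounded}.
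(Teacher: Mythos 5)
Your proposal is correct, and it shares the paper's overall skeleton --- reduce the hypothesis to the set equality $\bigcup_{0\le\beta<1}{\mathscr E}^{\{\beta\}}(A)={\mathscr E}^{(1)}(A)$ and invoke Lemma~\ref{bbounded} with $\beta'=1$ --- but it justifies the crucial inclusion ${\mathscr E}^{(1)}(A)\subseteq\bigcup_{0\le\beta<1}{\mathscr E}^{\{\beta\}}(A)$ by a genuinely different route. The paper proves the longer chain $\bigcap_{t>0}D(e^{tA})\subseteq\bigcup_{0\le\beta<1}{\mathscr E}^{\{\beta\}}(A)\subseteq{\mathscr E}^{(1)}(A)\subseteq\bigcap_{t>0}D(e^{tA})$: the first inclusion comes from Theorem~\ref{GWS} together with Proposition~\ref{particular} (the pointwise transfer from the function class ${\mathscr E}^{\{\beta\}}(I,X)$ to the vector class ${\mathscr E}^{\{\beta\}}(A)$), and the last from a spectral-measure computation showing $D(e^{tA})\supseteq D(e^{t|A|})$ and the description \eqref{GC} with $\beta=1$. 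You instead feed only the classical solutions with data in ${\mathscr E}^{(1)}(A)$ (represented by the power series \eqref{powser}, as the paper itself notes in justifying \eqref{incl4}) into the hypothesis, and then replace Proposition~\ref{particular} by the growth characterization \eqref{order} plus vector-valued Cauchy estimates at the origin; the optimization over $r$ and the Stirling conversion from $n^{\beta n}$ to $[n!]^\beta$ are carried out correctly. Your route is more function-theoretic and arguably cleaner on one point: since the bound in \eqref{order} is global on $\C$, applying Cauchy's estimates at $z=0$ is unproblematic even though the hypothesis only concerns $(0,+\infty)$, whereas the paper's proof passes from ${\mathscr E}^{\{\beta\}}((0,+\infty),X)$ to $y(0)\in{\mathscr E}^{\{\beta\}}(A)$ via Proposition~\ref{particular}, which as stated yields membership only for $t\in I$. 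What the paper's route buys in exchange is that it stays entirely within the spectral-measure/operational-calculus toolkit (Proposition~\ref{prop} and \eqref{GC}) and establishes along the way the identification of the full initial-value subspace $\bigcap_{t>0}D(e^{tA})$ with ${\mathscr E}^{(1)}(A)$ under the hypothesis. Two cosmetic remarks: your contradiction framing is unnecessary, since Lemma~\ref{bbounded} yields boundedness directly once the equality is in hand; and the symbol $\beta'$ in your final union clashes with the $\beta'$ of Lemma~\ref{bbounded}.
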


\begin{proof}\quad
Let $y(\cdot)$ be an arbitrary weak solution of equation \eqref{1}. Then, by the premise,
\[
y(\cdot) \in {\mathscr E}^{\{\beta\}}([0,\infty),X)
\] 
with some $0<\beta<1$ ($\beta$ depends on $y(\cdot)$).

This, by Proposition \ref{particular}, implies that
\begin{equation*}
y(t)\in {\mathscr E}^{\{\beta\}}(A),\ t\ge 0.
\end{equation*}
In particular,
\begin{equation*}
y(0)\in {\mathscr E}^{\{\beta\}}(A).
\end{equation*}

Since, by Theorem \ref{GWS}, the initial values of all weak solutions of equation \eqref{1} form the subspace
\begin{equation*}
\bigcap_{t\ge 0}D(e^{tA}),
\end{equation*}
in view of $D(e^{0A})=D(I)=X$, we have the inclusion
\begin{equation}\label{incl3}
\bigcap_{t>0}D(e^{tA})=\bigcap_{t\ge 0}D(e^{tA})\subseteq \bigcup_{0\le\beta<1}{\mathscr E}^{\{\beta\}}(A).
\end{equation}

Since, for an arbitrary $t>0$ and any $f\in X$, $g^*\in X^*$,
\[
\int\limits_{\sigma(A)}\left|e^{t\lambda}\right|\,dv(f,h^*,\lambda)
=
\int\limits_{\sigma(A)}e^{t\Rep\lambda}\,dv(f,h^*,\lambda)
\le
\int\limits_{\sigma(A)}e^{t|\lambda|}\,dv(f,h^*,\lambda)
\]
and, considering the inclusion
\[
\{\lambda\in\sigma(A)\,|\,e^{t\Rep\lambda}>n\}\subseteq \{\lambda\in\sigma(A)\,|\,e^{t|\lambda|}>n\},\ t>0,n\in\N,
\]
for any $n\in\N$,
\begin{multline*}
\int\limits_{\{\lambda\in\sigma(A)\,|\,\left|e^{t\lambda}\right|>n\}}\left|e^{t\lambda}\right|\,dv(f,h^*,\lambda)
=
\int\limits_{\{\lambda\in\sigma(A)\,|\,e^{t\Rep\lambda}>n\}}e^{t\Rep\lambda}\,dv(f,h^*,\lambda)
\\
\ \
\le
\int\limits_{\{\lambda\in\sigma(A)\,|\,e^{t|\lambda|}>n\}}e^{t\Rep\lambda}\,dv(f,h^*,\lambda)
\le
\int\limits_{\{\lambda\in\sigma(A)\,|\,e^{t|\lambda|}>n\}}e^{t|\lambda|}\,dv(f,h^*,\lambda),
\hfill
\end{multline*}
by Proposition \ref{prop}, we infer that, for each $t>0$,
\[
D(e^{tA})\supseteq D(e^{t|A|}),
\]
which, in view of \eqref{GC}, with $\beta=1$,
implies that
\begin{equation}\label{incl4}
\bigcap_{t>0}D(e^{tA})\supseteq
\bigcap_{t>0}D(e^{t|A|})={\mathscr E}^{(1)}(A),
\end{equation}
the equlity here being necessary and sufficient for $-A$ to be a generator of an analytic $C_0$-semigroup \cite{Markin2004(3)} (see also \cite{Hille-Phillips,Engel-Nagel}).

Observe that inclusion \eqref{incl4} also directly follows from the fact that ${\mathscr E}^{(1)}(A)$ is the subspace of initial values in $X$ generating the (classical) solutions of \eqref{1}, which are entire vector functions represented by power series
\eqref{powser} (see Preliminaries).

Inclusions \eqref{incl3} and \eqref{incl2} for $\beta'=1$ jointly with \eqref{incl4}, imply the following closed chain
\begin{equation*}
\bigcap_{t>0}D(e^{tA})\subseteq
\bigcup_{0\le\beta<1}{\mathscr E}^{\{\beta\}}(A)\subseteq
{\mathscr E}^{(1)}(A)\subseteq
\bigcap_{t>0}D(e^{tA}),
\end{equation*}
which proves that
\begin{equation*}
\bigcup_{0\le\beta<1}{\mathscr E}^{\{\beta\}}(A)=
{\mathscr E}^{(1)}(A).
\end{equation*}

Whence, by Lemma \ref{bbounded} with $\beta'=1$, we infer that the operator $A$ is \textit{bounded}, 
which completes the proof and implies that each weak solution $y(\cdot)$ of equation \eqref{1} is an entire vector function of the form
\[
y(z)=e^{zA}f=\sum_{n=0}^\infty \dfrac{z^n}{n!}A^nf,\ z\in \C,\ \text{with some}\ f\in X,
\] 
and hence, satisfying the growth condition
\[
\|y(z)\|\le \|f\|e^{\|A\||z|}, \ z\in \C,
\]
is of \textit{exponential type} (see Preliminaries).
\end{proof}

Theorem \ref{mr} generalizes {\cite[Theorem $5.1$]{Markin2001(2)}}, its counterpart for a normal operator $A$ in a complex Hilbert space and shows that,  while a scalar type spectral operator $A$ in a complex Banach space can be \textit{unbounded} while all weak solutions of equation \eqref{1} are \textit{entire} vector functions (see {\cite[Theorem $4.1$]{Markin2018(3)}} and {\cite[Corollary $4.1$]{Markin2018(3)}}), it cannot remain unbounded, if each weak solutions $y(\cdot)$ of \eqref{1}, in addition to being entire, is to satisfy the growth condition 
\begin{equation*}
\|y(z)\|\le Me^{\gamma|z|^{1/(1-\beta)}},\ z\in \C,
\end{equation*}
with some $0\le \beta<1$, $\gamma>0$, and $M>0$
depending on $y(\cdot)$ (see Preliminaries, cf. \eqref{order}).



\begin{thebibliography}{99}
\bibitem{Ball}
{J.M. Ball},	
\textit{Strongly continuous semigroups, weak solutions, and the variation of constants formula},	
{Proc. Amer. Math. Soc.}	
\textbf{63}	
{(1977)},	
{no.~2},	
{370--373}.	
\bibitem{Dunford1954}
{N. Dunford},	
\textit{Spectral operators},	
{Pacific J. Math.}	
\textbf{4}	
{(1954)},	
{321--354}.	
\bibitem{Survey58}
{\bysame},	
\textit{A survey of the theory of spectral operators}, 
{Bull. Amer. Math. Soc.}	
\textbf{64}	
{(1958)},	
{217--274}.	
\bibitem{Dun-SchI}
{N. Dunford and J.T. Schwartz with the assistance of W.G. Bade and R.G. Bartle},	
\textit{Linear Operators. Part I: General Theory},	
{Interscience Publishers},	
{New York},		
{1958}.		
\bibitem{Dun-SchII}
{\bysame},	
\textit{Linear Operators. Part II: Spectral Theory. Self Adjoint Operators in Hilbert Space}, 
{Interscience Publishers},	
{New York},		
{1963}.		
\bibitem{Dun-SchIII}
{\bysame},	
\textit{Linear Operators. Part III: Spectral Operators}, 
{Interscience Publishers},	
{New York},		
{1971}.		
\bibitem{Engel-Nagel}
{K.-J. Engel and R. Nagel},	
\textit{One-Parameter Semigroups
for Linear Evolution Equations}, 
{Graduate Texts in Mathematics, vol. 194},	
{Springer-Verlag},	
{New York},		
{2000}.		
\bibitem{Gevrey}
{M. Gevrey},	
\textit{Sur la nature analytique des solutions 
des  \'equations aux d\'eriv\'ees partielles},	
{Ann. \'Ec. Norm. Sup. Paris}	
\textbf{35}	
{(1918)},	
{129--196}.	
\bibitem{Goodman}
{R. Goodman},	
\textit{Analytic and entire vectors for representations of Lie groups},	
{Trans. Amer. Math. Soc.}	
\textbf{143}	
{(1969)},	
{55--76}.	
\bibitem{GorV83}
{V.I. Gorbachuk},	
\textit{Spaces of infinitely differentiable vectors of a nonnegative self-adjoint operator},	
{Ukrainian Math. J.}	
\textbf{35}	
{(1983)},	
{531--534}.	
\bibitem{book}
{V.I. Gorbachuk and M.L. Gorbachuk},	
\textit{Boundary Value Problems for Operator Differential Equations},	
{Mathematics and Its Applications (Soviet Series), vol.~48},	
{Kluwer Academic Publishers Group},	
{Dordrecht},		
{1991}.		
\bibitem{Gor-Knyaz}
{V.I. Gorbachuk and A.V. Knyazyuk},	
\textit{Boundary values of solutions of op\-erator-differential equations},	
{Russ. Math. Surveys}	
\textbf{44}	
{(1989)},	
{67--111}.	
\bibitem{Halmos}
{P.R. Halmos},	
\textit{Measure Theory}, 
{Graduate Texts in Mathematics, vol. 18},	
{Springer-Verlag},	
{New York},		
{1974}.		
\bibitem{Hille-Phillips}
{E. Hille and R.S. Phillips},	
\textit{Functional Analysis and Semi-groups},	
{American Mathematical Society Colloquium Publications, vol.~31},	
{Amer. Math. Soc.},	
{Providence, RI},		
{1957}.		
\bibitem{Komatsu1}
{H. Komatsu},	
\textit{Ultradistributions and hyperfunctions. Hyperfunctions and pseudo-differential equations},	
{Lecture Notes in Math.},	
{vol.~287},	
{164--179},	
{Springer},	
{Berlin},		
{1973}.		
\bibitem{Komatsu2}
{\bysame},	
\textit{Ultradistributions. I. Structure theorems and a characterization},	
{J. Fac. Sci. Univ. Tokyo Sect. IA Math.}	
\textbf{20}	
{(1973)},	
{25--105}.	
\bibitem{Komatsu3}
{\bysame},	
\textit{Microlocal analysis in Gevrey classes and in complex domains},	
{Lecture Notes in Math.},	
{vol.~1495},	
{161--236},	
{Springer},	
{Berlin},		
{1991}.		
\bibitem{Levin}
{B.Ja. Levin},	
\textit{Distribution of Zeros of Entire Functions}, 
{Translations of Mathematical Monographs, vol. 5},	
{American Mathematical Society},	
{Providence, RI},		
{1980}.		
\bibitem{Mandel}
{S. Mandelbrojt},	
\textit{S\'eries de Fourier et Classes Quasi-Analytiques de Fonctions},	
{Gauthier-Villars},	
{Paris},		
{1935}.		
\bibitem{Markin1999}
{M.V. Markin},	
\textit{On the strong smoothness of weak solutions of an abstract evolution equation. I. Differentiability},	
{Appl. Anal.}	
\textbf{73}	
{(1999)},	
{no.~3-4},	
{573--606}.	
\bibitem{Markin2001(1)}
{\bysame},	
\textit{On the strong smoothness of weak solutions of an abstract evolution equation. II. Gevrey ultra\-differentiability}, 
{Ibid.}	
\textbf{78}	
{(2001)},	
{no.~1-2},	
{97--137}.	
\bibitem{Markin2001(2)}
{\bysame},	
\textit{On the strong smoothness of weak solutions of an abstract evolution equation. III. Gevrey ultradifferentiability of orders less than one}, 
{Ibid.}	
\textbf{78}	
{(2001)},	
{no.~1-2},	
{139--152}.	
\bibitem{Markin2002(1)}
{\bysame},	
\textit{On an abstract evolution equation with a spectral operator of scalar type},	
{Int. J. Math. Math. Sci.}	
\textbf{32}	
{(2002)},	
{no.~9},	
{555--563}.	
\bibitem{Markin2002(2)}
{\bysame},	
\textit{A note on the spectral operators of scalar type and semigroups of bounded linear operators},	
{Ibid.}	
\textbf{32}	
{(2002)},	
{no.~10},	
{635--640}.	
\bibitem{Markin2004(1)}
{\bysame},	
\textit{On scalar type spectral operators, infinite differentiable and Gevrey ultradifferentiable $C_0$-semigroups},	
{Ibid.}	
\textbf{2004}	
{(2004)},	
{no.~45},	
{2401--2422}.	
\bibitem{Markin2004(2)}
{\bysame},	
\textit{On the Carleman classes of vectors of a scalar type spectral operator},	
{Ibid.}	
\textbf{2004}	
{(2004)},	
{no.~60},	
{3219--3235}.	
\bibitem{Markin2004(3)}
{\bysame},	
\textit{A characterization of the generators of analytic $C_0$-semigroups in the class of scalar type spectral operators},	
{Abstr. Appl. Anal.}	
\textbf{2004}	
{(2004)},	
{no.~12},	
{1007--1018}.	
\bibitem{Markin2011}
{\bysame},	
\textit{On the differentiability of weak solutions of an abstract evolution equation with a scalar type spectral operator},	
{Int. J. Math. Math. Sci.}	
\textbf{2011}	
{(2011)},	
{Article ID 825951},	
{27 pp.}	
\bibitem{Markin2015}
{\bysame},	
\textit{On the Carleman ultradifferentiable vectors of a scalar type spectral operator},	
{Methods Funct. Anal. Topology} 
\textbf{21}	
{(2015)},	
{no.~4},	
{361--369}.	
\bibitem{Markin2018(2)}
{\bysame},	
\textit{On the mean ergodicity of weak solutions of an abstract evolution equation},	
{Ibid.}	
\textbf{24}	
{(2018)},	
{no.~1},	
{53--70}.	
\bibitem{Markin2018(3)}
{\bysame},	
\textit{On the Gevrey ultradifferentiability of weak solutions of an abstract evolution equation with a scalar type spectral operator},	
{Ibid.}	
\textbf{24}	
{(2018)},	
{no.~4}	
{(to appear)}.	
\bibitem{Markin2018(4)}
{\bysame},	
\textit{On the Gevrey ultradifferentiability of weak solutions of an abstract evolution equation with a scalar type spectral operator on the open semi-axis},	
\href{https://arxiv.org/abs/1707.09359}{\textcolor{blue}{arXiv:1707.09359}}.
\bibitem{Nelson}
{E. Nelson},	
\textit{Analytic vectors},	
{Ann. of Math. (2)}	
\textbf{70}	
{(1959)},	
{no.~3},	
{572--615}.	
\bibitem{Plesner}
{A.I. Plesner},	
\textit{Spectral Theory of Linear Operators},	
{Nauka},	
{Moscow},		
{1965}		
{(Russian)}.
\bibitem{Radyno1983(1)}
{Ya.V. Radyno},	
\textit{The space of vectors of exponential type},	
{Dokl. Akad. Nauk BSSR}	
\textbf{27}	
{(1983)},	
{no.~9},	
{791–-793}	
{(Russian with English summary)}.
\bibitem{Wermer}
{J. Wermer},	
\textit{Commuting spectral measures on Hilbert space},	
{Pacific J. Math.}	
\textbf{4}	
{(1954)},	
{no.~3},	
{355--361}.	
\end{thebibliography}
\end{document}